\newtheorem{theorem}{Theorem}
\theoremstyle{plain}
\newtheorem{definition}{Definition}
\newtheorem{proposition}{Proposition}
\newtheorem{remark}{Remark}
\numberwithin{equation}{section}
\begin{document}

\begin{center}
\bigskip {\Huge Soft Topology on Function Spaces}

\textbf{\ Taha Yasin \"{O}ZT\"{U}RK$^{\mathrm{a}}$ \textbf{and }Sadi BAYRAMOV%
\textbf{$^{\mathrm{a}}$}}

\medskip

$^{\mathrm{a}}$\textit{Department of Mathematics$,$ Faculty of Science and
Letters,}\\[0pt]
\textit{Kafkas University$,$ TR-}$36100$\textit{\ Kars$,$ Turkey}\\[0pt]

\textbf{e-mails: taha36100@hotmail.com, baysadi@gmail.com \ }

\bigskip

\textbf{Abstract}
\end{center}

\begin{quotation}
Molodtsov initiated the concept of soft sets in \cite{molod}. Maji et al.
defined some operations on soft sets in \cite{maji2}. The concept of soft
topological space was introduced by some authors. In this paper, we
introduce the concept of the pointwise topology of soft topological spaces
and the properties of soft mappings spaces. Finally, we investigate the
relationships between some soft mappings spaces.
\end{quotation}

\noindent \textbf{Key Words and Phrases.} soft set,\textbf{\ }soft
point,soft topological space, soft continuos mapping, soft mappings spaces,
soft pointwise topology.

\section{\protect\bigskip INTRODUCTION}

Many practical problems in economics, engineering, environment, social
science, medical science etc. cannot be dealt with by classical methods,
because classical methods have inherent difficulties. The reason for these
difficulties may be due to the inadequacy of the theories of
parameterization tools. Molodtsov \cite{molod} initiated the concept of soft
set theory as a new mathematical tool for dealing with uncertainties. Maji
et al. \cite{maji},\cite{maji2} research deal with operations over soft set.
The algebraic structure of set theories dealing with uncertainties is an
important problem. Many researchers heve contributed towards the algebraic
structure of soft set theory. Akta\c{s} and \c{C}a\u{g}man \cite{aktas}
defined soft groups and derived their basic properties. U. Acar et al. \cite%
{acar} introduced initial concepts of soft rings. F. Feng et al. \cite{feng}
defined soft semirings and several related notions to establish a connection
between soft sets and semirings. M. Shabir et al. \cite{shabir} studied soft
ideals over a semigrup. Qiu Mei Sun et al. \cite{sun} defined soft modules
and investigated their basic properties. C. Gunduz(Aras) and S. Bayramov 
\cite{gunduz1},\cite{gunduz} introduced fuzzy soft modules and
intuitionistic fuzzy soft modules and investigated some basic properties. T.
Y. Ozturk and S. Bayramov defined chain complexes of soft modules and their
soft homology modules. T. Y. Ozturk et al. introduced the concept of inverse
and direct systems in the category of soft modules.

Recently, Shabir and Naz \cite{shabir2} initiated the study of soft
topological spaces. Theoretical studies of soft topological spaces have also
been by some authors in \cite{shabir3},\cite{aygun},\cite{zorlutuna},\cite%
{min},\cite{cagman},\cite{hussain}. In the study \cite{bayramov} were given
different soft point concepts from the studies \cite{shabir3},\cite{aygun},%
\cite{zorlutuna},\cite{min},\cite{cagman},\cite{hussain}. In this study,
soft point consepts in the study \cite{bayramov} is used.

In the present study, the pointwise topology is defined in soft continuous
mappings space and the properties of soft mappings spaces is investigated.
Subsequently, the relations is given between on some soft mappings spaces.

\section{\textbf{PREL\.{I}M\.{I}NARY}}

In this section we will introduce necessary definitions and theorems for
soft sets. Molodtsov \cite{molod} defined the soft set in the following way.
Let $X$ be an initial universe set and $E$ be a set of parameters. Let $P(X)$
denotes the power set of $X$ and $A\subset E.$

\begin{definition}
\noindent \textbf{\ }\cite{molod} A pair $(F,A)$ is called a soft set over $%
X,$ where $F$ is a mapping given by $F:A\rightarrow P(X)$.

In other words, the soft set is a parameterized family of subsets of the set 
$X$. For $e\in A,$ $F(e)$ may be considered as the set of $\varepsilon -$%
elements of the soft set $(F,A),$ or as the set of $e-$approximate elements
of the soft set.
\end{definition}

\begin{definition}
\cite{maji2} For two soft sets $(F,A)$ and $(G,B)$ over $X$, $(F,A)$ is
called soft subset of $(G,B)$ if \noindent \textbf{\ }

\begin{enumerate}
\item $A\subset B$ and

\item $\forall e\in A$, $F(e)$ and $G(e)$ are identical approximations.

\noindent This relationship is denoted by $(F,A)\overset{\sim }{\subset }%
(G,B)$. Similarly, $(F,A)$ is called a soft superset of $(G,B)$ if $(G,B)$
is a soft subset of $(F,A)$. This relationship is denoted by $(F,A)\overset{%
\sim }{\supset }(G,B)$. Two soft sets $(F,A)$ and $(G,B)$ over $X$ are
called soft equal if $(F,A)$ is a soft subset of $(G,B)$, and $(G,B)$ is a
soft subset of $(F,A)$.
\end{enumerate}
\end{definition}

\begin{definition}
\noindent \cite{maji2} The intersection of two soft sets $(F,A)$ and $(G,B)$
over $X$ is the soft set $(H,C)$, where $C=A\cap B$ and $\forall e\in C$, $%
H(e)=F(e)\cap G(e)$. This is denoted by $(F,A)\overset{\sim }{\cap }%
(G,B)=(H,C)$.
\end{definition}

\begin{definition}
\noindent\ \textbf{\ \cite{maji2} }The union of two soft sets $(F,A)$ and $%
(G,B)$ over $X$ is the soft set, where $C=A\cup B$ and $\forall e\in C$, 
\begin{equation*}
H(\varepsilon )=\left\{ 
\begin{array}{l}
{F(e),\,\,\,\,\,\,\,\,\,\,\,\,\,\,\,\,\,\,\,\,\,\,\,\,\,\,\,\ \ \ \ \mathrm{%
if\;}}e\text{ }{\in A-B,} \\ 
{G(e),\,\,\,\,\,\,\,\,\,\,\,\,\,\,\,\,\,\,\,\,\,\,\,\,\,\,\ \ \ \,\mathrm{%
if\;}}e\text{ }{\in \mathrm{\;}B-A,} \\ 
{F(e)\cup G(e)\,\,\,\,\,\,\,\,\,\,\,\,\mathrm{if\;}}e\text{ }{\in A\cup B.}%
\end{array}%
\right.
\end{equation*}%
This relationship is denoted by $(F,A)\overset{\sim }{\cup }(G,B)=(H,C)$.
\end{definition}

\begin{definition}
\textbf{\cite{maji2} }A soft set $(F,A)$ over $X$ is said to be a NULL soft
set denoted by $\Phi $ if for all $e\in A,$ $F(e)=\varnothing $(null set).
\end{definition}

\begin{definition}
\textbf{\cite{maji2} }A soft set $(F,A)$ over $X$ is said to be an absolute
soft set denoted by $\overset{\sim }{X}$ if for all $e\in A,$ $F(e)=X$(null
set).
\end{definition}

\begin{definition}
\cite{shabir2} The difference $(H,E)$ of two soft sets $(F,E)$ and $(G,E)$
over $X$ , denoted by $(F,E)\backslash (G,E),$ is defined as $H(e)=F(e)/G(e)$
for all $e\in E.$
\end{definition}

\begin{definition}
\cite{shabir2} Let $Y$ be a non-empty subset of $X$ , then $\overset{\sim }{Y%
}$ denotes the soft set $(Y,E)$ over $X$ for which $Y(e)=Y,$ for all $e\in
E. $

In particular, $(X,E)$ will be denoted by $\overset{\sim }{X}.$
\end{definition}

\begin{definition}
\cite{shabir2} Let $(F,E)$ be a soft set over $X$ and $Y$ be a non-empty
subset of $X.$ Then the sub soft set of $(F,E)$ over $Y$ denoted by $\left(
^{Y}F,E\right) ,$ is defined as follows $^{Y}F(e)=Y\cap F(e),$ for all $e\in
E.$ In other words $\left( ^{Y}F,E\right) =\overset{\sim }{Y}\cap (F,E).$
\end{definition}

\begin{definition}
\cite{babitha} Let $(F,A)$ and $(G,B)$ be two soft sets over $X_{1}$ and $%
X_{2},$ respectively. The cartesian product $(F,A)\times (G,B)$ is defined
by $(F\times G)_{(A\times B)}$ where

\begin{equation*}
(F\times G)_{(A\times B)}(e,k)=\left( F,A\right) (e)\times (G,B)(k),\text{ \ 
}\forall (e,k)\in A\times B.
\end{equation*}

According to this definiton, the soft set $(F,A)\times (G,B)$ is soft set
over $X_{1}\times X_{2}$ and its parameter universe is $E_{1}\times E_{2}.$
\end{definition}

\begin{definition}
\cite{babitha} Let $(F_{1},E_{1})$ and $(F_{2},E_{2})$ be two soft sets over 
$X_{1}$ and $X_{2},$ respectively and $p_{i}:X_{1}\times X_{2}\rightarrow
X_{i},$ $q_{i}:E_{1}\times E_{2}\rightarrow E_{i}$ be projection mappings in
classical meaning. Then the soft mappings $\left( p_{i},q_{i}\right) ,$ $%
i\in \left\{ 1,2\right\} ,$ is called soft projection mapping from $%
X_{1}\times X_{2}$ to $X_{i}$ and defined by

\begin{eqnarray*}
\left( p_{i},q_{i}\right) \left( (F_{1},E_{1})\times (F_{2},E_{2})\right)
&=&\left( p_{i},q_{i}\right) \left( \left( F_{1}\times F_{2}\right) ,\left(
E_{1}\times E_{2}\right) \right) \\
&=&p_{i}\left( F_{1}\times F_{2}\right) ,q_{i}\left( E_{1}\times E_{2}\right)
\\
&=&(F,E)_{i}.
\end{eqnarray*}
\end{definition}

\begin{definition}
\cite{shabir2} Let $\tau $ be the collection of soft set over $X,$ then $%
\tau $ is said to be a soft topology on $X$ if

1) $\Phi ,\overset{\sim }{X}$ belongs to $\tau ;$

2) the union of any number of soft sets in $\tau $ belongs to $\tau ;$

3) the intersection of any two soft sets in $\tau $ belongs to $\tau .$

The triplet $\left( X,\tau ,E\right) $ is called a soft topological space
over $X.$
\end{definition}

\begin{definition}
\cite{shabir2} Let $\left( X,\tau ,E\right) $ be a soft topological space
over $X,$ then members of $\tau $ are said to be soft open sets in $X.$
\end{definition}

\begin{definition}
\cite{shabir2} Let $\left( X,\tau ,E\right) $ be a soft topological space
over $X.$ A soft set $(F,E)$ over $X$ is said to be a soft closed in $X,$ if
its relative complement $(F,E)^{\prime }$ belongs to $\tau $.
\end{definition}

\begin{proposition}
\cite{shabir2} Let $\left( X,\tau ,E\right) $ be a soft topological space
over $X.$ Then the collection $\tau _{e}=\{F(e):(F,E)\in \tau \}$ for each $%
e\in E,$ defines a topology on $X$.
\end{proposition}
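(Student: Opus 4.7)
The plan is to verify the three topology axioms for $\tau_e$ directly, pulling each back to the corresponding soft topology axiom in $\tau$ and then evaluating at the fixed parameter $e$.

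First I would handle the trivial sets. Since $\Phi\in\tau$ and $\Phi(e)=\varnothing$ by the definition of the null soft set, we get $\varnothing\in\tau_e$. Similarly $\widetilde{X}\in\tau$ and $\widetilde{X}(e)=X$, so $X\in\tau_e$. This takes care of axiom (1).

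Next, for arbitrary unions, I would take a family $\{U_\alpha\}_{\alpha\in I}\subset\tau_e$, choose soft open sets $(F_\alpha,E)\in\tau$ with $F_\alpha(e)=U_\alpha$, and form $(H,E)=\widetilde{\bigcup}_\alpha (F_\alpha,E)$, which lies in $\tau$ by axiom (2) of a soft topology. Since the parameter set of every $(F_\alpha,E)$ is the same $E$, the union formula from the definition of $\widetilde{\cup}$ reduces to $H(e)=\bigcup_\alpha F_\alpha(e)=\bigcup_\alpha U_\alpha$, so this ordinary union lies in $\tau_e$. For finite intersections, the same argument with two soft open sets $(F,E),(G,E)\in\tau$ and the definition of $\widetilde{\cap}$ gives $(F,E)\widetilde{\cap}(G,E)=(H,E)\in\tau$ with $H(e)=F(e)\cap G(e)$, so $\tau_e$ is closed under binary (hence finite) intersections.

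Honestly, no step presents a real obstacle here; the proposition is a direct translation from soft topology axioms to classical ones once one observes that the common parameter set $E$ lets us evaluate every operation pointwise at $e$. The only subtlety worth mentioning is that the union and intersection definitions in the preliminaries are stated for soft sets with possibly different parameter sets $A$ and $B$; since in $\tau$ every soft set has parameter set $E$, the definitions collapse to pointwise union and intersection of the value sets, which is exactly what makes the evaluation-at-$e$ map a homomorphism of the set-theoretic operations.
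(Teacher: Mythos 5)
Your proof is correct: the paper itself states this proposition as a cited preliminary from Shabir and Naz without giving any proof, and your axiom-by-axiom verification (evaluating $\Phi$, $\widetilde{X}$, soft unions and soft intersections at the fixed parameter $e$) is exactly the standard argument one would supply. Your remark that the union/intersection definitions, stated for possibly different parameter sets $A$ and $B$, collapse to pointwise operations when every member of $\tau$ has the common parameter set $E$ is the right observation and closes the only point where care is needed.
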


\begin{definition}
\cite{shabir2} Let $\left( X,\tau ,E\right) $ be a soft topological space
over $X$ and $(F,E)$ be a soft set over $X.$ Then the soft closure of $%
(F,E), $ denoted by $\overline{(F,E)}$ is the isntersection of all soft
closed super sets of $(F,E).$ Clearly $\overline{(F,E)}$ is the smallest
soft closed set over $X$ which contains $(F,E).$
\end{definition}

\begin{definition}
\cite{shabir2} Let $x\in X,$ then $(x,E)$ denotes the soft set over $X$ for
which $x(e)=\{x\}$ for all $e\in E.$
\end{definition}

\begin{definition}
\cite{bayramov} Let $(F,E)$ be a soft set over $X$. The soft set $(F,E)$ is
called a soft point, denoted by $\left( x_{e},E\right) ,$ if for the element 
$e\in E,$ $F(e)=\{x\}$ and $F(e^{\prime })=\varnothing $ for all $e^{\prime
}\in E-\{e\}.$
\end{definition}

\begin{definition}
\cite{bayramov} Two soft points $\left( x_{e},E\right) $ and $\left(
y_{e^{\prime }},E\right) $ over a common universe $X$, we say that the
points are different points if $x\neq y$ or $e\neq e^{\prime }.$
\end{definition}

\begin{definition}
\cite{bayramov} Let $\left( X,\tau ,E\right) $ be a soft topological space
over $X.$ A soft set $(F,E)$ in $\left( X,\tau ,E\right) $ is called a soft
neighborhood of the soft point $\left( x_{e},E\right) \in (F,E)$ if there
exists a soft open set $(G,E)$ such that $\left( x_{e},E\right) \in
(G,E)\subset (F,E)$.
\end{definition}

\begin{definition}
\cite{gunduz2} Let $\left( X,\tau ,E\right) $ and $(Y,\tau ^{\prime },E)$ be
two soft topological spaces, $f:\left( X,\tau ,E\right) \rightarrow (Y,\tau
^{\prime },E)$ be a mapping. For each soft neighbourhood $(H,E)$ of $\left(
f(x)_{e},E\right) ,$if there exists a soft neighbourhood $(F,E)$ of $\left(
x_{e},E\right) $ such that $f\left( (F,E)\right) \subset (H,E),$ then $f$ is
said to be soft continuous mapping at $\left( x_{e},E\right) .$

If $f$ is soft continuous mapping for all $\left( x_{e},E\right) $, then $f$
is called soft continuous mapping.
\end{definition}

\begin{definition}
\cite{aygun} Let $\left( X,\tau ,E\right) $ be a soft topological space over 
$X.$ A subcollection $\beta $ of $\tau $ is said to be a base for $\tau $ if
every member of $\tau $ can be expressed as a union of members of $\beta .$
\end{definition}

\begin{definition}
\cite{aygun} Let $\left( X,\tau ,E\right) $ be a soft topological space over 
$X.$ A subcollection $\delta $ of $\tau $ is said to be a subbase for $\tau $
if the family of all finite intersections members of $\delta $ forms a base
for $\tau .$
\end{definition}

\begin{definition}
\cite{aygun} $\left\{ (\varphi _{i},\psi _{i}):(X,\tau ,E)\rightarrow \left(
Y_{i},\tau _{i},E_{i}\right) \right\} _{i\in \Delta }$ be a family of soft
mappings and $\left\{ \left( Y_{i},\tau _{i},E_{i}\right) \right\} _{i\in
\Delta }$ is a family of soft topological spaces. Then, the topology $\tau $
generated from the subbase $\delta =\left\{ (\varphi _{i},\psi _{i})_{i\in
\Delta }^{-1}(F,E):(F,E)\in \tau _{i},i\in \Delta \right\} $ is called the
soft topology (or initial soft topology) induced by the family of soft
mappings $\left\{ (\varphi _{i},\psi _{i})\right\} _{i\in \Delta }.$
\end{definition}

\begin{definition}
\cite{aygun} Let $\left\{ \left( X_{i},\tau _{i},E_{i}\right) \right\}
_{i\in \Delta }$ be a family of soft topological spaces. Then, the initial
soft topology on $X\left( =\tprod\nolimits_{i\in \Delta }X_{i}\right) $
generated by the family $\left\{ \left( p_{i},q_{i}\right) \right\} _{i\in
\Delta }$ is called product soft topology on $X.$ ( Here, $\left(
p_{i},q_{i}\right) $ is the soft projection mapping from $X$ to $X_{i},$ $%
i\in \Delta $).

The product soft topology is denoted by $\tprod\nolimits_{i\in \Delta }\tau
_{i}.$
\end{definition}

\section{Soft Topology on Function Spaces}

Let $\{(X_{s},\tau _{s},E)\}_{s\in S}$ be a family of soft topological
spaces over the same parameters set $E.$ We define a family of soft sets $%
\left( \underset{s\in S}{\dprod }X_{s},\tau ,E\right) $ as follows;

If $F_{s}:E\rightarrow P(X_{s})$ is a soft set over $X_{s}$ for each $s\in
S, $ then $\underset{s\in S}{\dprod }F_{s}:E\rightarrow P(\underset{s\in S}{%
\dprod }X_{s})$ is defined by $\left( \underset{s\in S}{\dprod }F_{s}\right)
(e)=\underset{s\in S}{\dprod }F_{s}(e).$ Let's consider the topological
product $\left( \underset{s\in S}{\dprod }X_{s},\underset{s\in S}{\dprod }%
\tau _{s},\underset{s\in S}{\dprod }E_{s}\right) $ of a family of soft
topological spaces $\{(X_{s},\tau _{s},E)\}_{s\in S}.$ We take the
contraction to the diagonal $\Delta \subset \underset{s\in S}{\dprod }E_{s}$
of each soft set $F:\underset{s\in S}{\dprod }E_{s}\rightarrow P(\underset{%
s\in S}{\dprod }X_{s}).$ Since there exist a bijection mapping between the
diagonal $\Delta $ and the parameters set $E,$ then the contractions of soft
sets are soft sets over $E$.

Now, let us define the topology on $\left( \underset{s\in S}{\dprod }%
X_{s},E\right) .$ Let $\left( p_{s_{0}},1_{E}\right) :\left( \underset{s\in S%
}{\dprod }X_{s},\tau ,E\right) \rightarrow (X_{s_{0}},\tau _{s_{0}},E)$ be a
projection mapping and the soft set $\left( p_{s_{0}},1_{E}\right)
^{-1}(F_{s_{0}},E)$ be for each $(F_{s_{0}},E)\in \tau _{s_{0}}$. Then

\begin{equation*}
\left( p_{s_{0}},1_{E}\right) ^{-1}(F_{s_{0}},E)=\left(
p_{s_{0}}^{-1}(F_{s_{0}}),E\right) =\left( F_{s_{0}}\times \underset{s\neq
s_{0}}{\dprod }\widetilde{X}_{s},E\right) .
\end{equation*}

The topology generated from $\left\{ \left. \left( F_{s_{0}}\times \underset{%
s\neq s_{0}}{\dprod }\widetilde{X}_{s},E\right) \right\vert s_{0}\in S,\text{
}(F_{s_{0}},E)\in \tau _{s_{0}}\right\} $ as a soft subbase and the soft
topology is denoted by $\tau =\underset{s\in S}{\dprod }\tau _{s}.$

\begin{definition}
The soft topological space $\left( \underset{s\in S}{\dprod }X_{s},\tau
,E\right) $ is called the product of family of the soft topological spaces $%
\{(X_{s},\tau _{s},E)\}_{s\in S}.$
\end{definition}

It is clear that each projection mapping $\left( p_{s},1_{E}\right) :\left( 
\underset{s\in S}{\dprod }X_{s},\tau ,E\right) \rightarrow (X_{s},\tau
_{s},E)$ is soft continuous. Additionally, the soft base of the soft
topology $\tau $ is formed by the soft sets

\begin{equation*}
\left\{ 
\begin{array}{c}
\left( F_{s_{1}}\times \underset{s\neq s_{1}}{\dprod }\widetilde{X}%
_{s},E\right) \cap ...\cap \left( F_{s_{n}}\times \underset{s\neq s_{n}}{%
\dprod }\widetilde{X}_{s},E\right)  \\ 
=\left( F_{s_{1}}\times ...\times F_{s_{n}}\times \underset{s\neq
s_{1}...s_{n}}{\dprod }\widetilde{X}_{s},E\right) 
\end{array}%
\right\} .
\end{equation*}

Let $\left( X,\tau ,E\right) $ be a soft topological space, $\{(Y_{s},\tau
_{s},E)\}_{s\in S}$ be a family of soft topological spaces and $\left\{
\left( f_{s},1_{E}\right) :\left( X,\tau ,E\right) \rightarrow (Y_{s},\tau
_{s},E)\right\} _{s\in S}$ be a family of soft mappings. For each soft point 
$x_{e}\in \left( X,\tau ,E\right) ,$ we define the soft map $f=\underset{%
s\in S}{\bigtriangleup }f_{s}:$ $\left( X,\tau ,E\right) \rightarrow \left( 
\underset{s\in S}{\dprod }Y_{s},\tau ,E\right) $ by $f(x_{e})=\left\{
f_{s}(x_{e})\right\} _{s\in S}=\left\{ (f_{s}(x))_{e}\right\} _{s\in S}.$ If 
$f:$ $\left( X,\tau ,E\right) \rightarrow \left( \underset{s\in S}{\dprod }%
Y_{s},\tau ,E\right) $ is any soft mapping, then $f=\underset{s\in S}{%
\bigtriangleup }f_{s}$ is satisfied for the family of soft mappings $\left\{
f_{s}=p_{s}\circ f:\left( X,\tau ,E\right) \rightarrow (Y_{s},\tau
_{s},E)\right\} _{s\in S}.$

\begin{theorem}
$f:\left( X,\tau ,E\right) \rightarrow \left( \underset{s\in S}{\dprod }%
Y_{s},\tau ,E\right) $ is soft continuous if and only if $f_{s}=p_{s}\circ
f:\left( X,\tau ,E\right) \rightarrow (Y_{s},\tau _{s},E)$ is soft
continuous for each $s\in S$.
\end{theorem}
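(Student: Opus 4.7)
The plan is to prove the two directions separately, using the product topology's subbase description given in the paragraph immediately preceding the theorem.

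For the forward direction, I would assume $f$ is soft continuous and observe that each projection $(p_s,1_E):\left(\underset{s\in S}{\dprod}Y_s,\tau,E\right)\rightarrow (Y_s,\tau_s,E)$ has already been noted to be soft continuous (since preimages of subbasic soft open sets are, by construction, themselves subbasic soft open sets of $\tau$). Hence each $f_s=p_s\circ f$ is a composition of soft continuous soft mappings. I would then invoke (or quickly verify from the neighborhood definition of soft continuity in Definition of \cite{gunduz2}) that composition preserves soft continuity: given a soft neighbourhood $(H,E)$ of $(f_s(x)_e,E)$, pull it back to a soft neighbourhood of $(f(x)_e,E)$ via continuity of $p_s$, then pull that back via continuity of $f$ to obtain the required soft neighbourhood of $(x_e,E)$.

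For the backward direction, assume each $f_s$ is soft continuous. The key point is that to check soft continuity of a map into a soft topological space whose topology is generated by a subbase, it suffices to verify that preimages of subbasic soft open sets are soft open. The subbase of $\tau=\underset{s\in S}{\dprod}\tau_s$ is
\begin{equation*}
\delta=\left\{(p_{s_0},1_E)^{-1}(F_{s_0},E)\mid s_0\in S,\ (F_{s_0},E)\in\tau_{s_0}\right\}.
\end{equation*}
For any such subbasic element, compute
\begin{equation*}
f^{-1}\left((p_{s_0},1_E)^{-1}(F_{s_0},E)\right)=(p_{s_0}\circ f)^{-1}(F_{s_0},E)=f_{s_0}^{-1}(F_{s_0},E),
\end{equation*}
which is soft open in $(X,\tau,E)$ by soft continuity of $f_{s_0}$. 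Finite soft intersections of such preimages yield preimages of basic soft open sets (since $f^{-1}$ commutes with soft intersections), and arbitrary soft unions yield preimages of arbitrary soft open sets, so $f^{-1}(G,E)\in\tau$ for every $(G,E)\in\tau$.

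The main obstacle I anticipate is the small bookkeeping step of reconciling the neighbourhood-style Definition of soft continuity from \cite{gunduz2} with the open-set formulation I use in the backward direction; I would bridge this by noting that soft continuity at every soft point is equivalent to $f^{-1}(G,E)$ being soft open for every soft open $(G,E)$, using the characterization of soft open sets as soft neighbourhoods of each of their soft points (Definition of soft neighbourhood). Once that equivalence is in hand, both directions reduce to the elementary computations above, and the universal-property argument goes through exactly as in the classical topological case.
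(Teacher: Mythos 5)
Your proposal is correct and follows essentially the same route as the paper: the forward direction via continuity of the projections and of compositions, and the backward direction by computing $f^{-1}$ of the (sub)basic soft open sets of the product topology as intersections of the soft open sets $f_{s_i}^{-1}(F_{s_i},E)$. The only differences are cosmetic: you work from the subbase and close under finite intersections, where the paper starts directly from the base, and you make explicit the neighbourhood-versus-preimage equivalence that the paper leaves implicit.
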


\begin{proof}
$\Longrightarrow $ The proof is obvious.

$\Longleftarrow $ Let $\left( F_{s_{1}}\times ...\times F_{s_{n}}\times 
\underset{s\neq s_{1}...s_{n}}{\dprod }\widetilde{Y}_{s},E\right) $ be an
any soft base of product topology.

\begin{eqnarray*}
f^{-1}\left( F_{s_{1}}\times ...\times F_{s_{n}}\times \underset{s\neq
s_{1}...s_{n}}{\dprod }\widetilde{Y}_{s},E\right)  &=&f^{-1}\left(
p_{s_{1}}^{-1}(F_{s_{1}})\cap ...\cap p_{s_{n}}^{-1}(F_{s_{n}}),E\right)  \\
&=&\left( f^{-1}p_{s_{1}}^{-1}(F_{s_{1}})\cap ...\cap
f^{-1}p_{s_{n}}^{-1}(F_{s_{n}}),E\right) 
\end{eqnarray*}%
Since the soft mappings $p_{s_{1}}\circ f,...p_{s_{n}}\circ f$ are soft
continuous, the soft set 

\begin{equation*}
\left( f^{-1}p_{s_{1}}^{-1}(F_{s_{1}})\cap ...\cap
f^{-1}p_{s_{n}}^{-1}(F_{s_{n}}),E\right) 
\end{equation*}

is soft open. Thus, $f:\left( X,\tau ,E\right) \rightarrow \left( \underset{%
s\in S}{\dprod }Y_{s},\tau ,E\right) $ is soft continuous.
\end{proof}

If $\left\{ f_{s}:\left( X_{s},\tau _{s},E\right) \rightarrow (Y_{s},\tau
_{s}^{^{\prime }},E)\right\} _{s\in S}$ is a family of soft continuous
mappings, then the soft mapping $\underset{s\in S}{\dprod }f_{s}:\left( 
\underset{s\in S}{\dprod }X_{s},\tau ,E\right) \rightarrow \left( \underset{%
s\in S}{\dprod }Y_{s},\tau ^{^{\prime }},E\right) $ is soft continuous.

Now, let the family of soft topological spaces $\{(X_{s},\tau
_{s},E)\}_{s\in S}$ be discoint, i.e., $X_{s_{1}}\cap X_{s_{2}}=\varnothing $
for each $s_{1}\neq s_{2}$. For the soft set $F:E\rightarrow \underset{s\in S%
}{\cup }X_{s}$ over the set $E$, define the soft set $\left. F\right\vert
_{X_{s}}:E\rightarrow X_{s}$ by%
\begin{equation*}
\left. F\right\vert _{X_{s}}(e)=F(e)\cap X_{s},\text{ }\forall e\in E
\end{equation*}

and the soft topology $\tau $ define by

\begin{equation*}
(F,E)\in \tau \Longleftrightarrow \left( \left. F\right\vert
_{X_{s}},E\right) \in \tau _{s}.
\end{equation*}

It is clear that $\tau $ is a soft topology.

\begin{definition}
A soft topological space $\left( \underset{s\in S}{\cup }X_{s},\tau
,E\right) $ is called soft topological sum of the family of soft topological
spaces $\{(X_{s},\tau _{s},E)\}_{s\in S}$ and denoted by $\underset{s\in S}{%
\oplus }(X_{s},\tau _{s},E).$
\end{definition}

Let $\left( i_{s},1_{E}\right) :\left( X,\tau ,E\right) \rightarrow \underset%
{s\in S}{\oplus }(X_{s},\tau _{s},E)$ be an inclusion mapping for each $s\in
S.$ Since

\begin{equation*}
\left( i_{s},1_{E}\right) ^{-1}(F,E)=\left( \left. F\right\vert
_{X_{s}},E\right) \in \tau _{s},\text{ for }(F,E)\in \tau ,
\end{equation*}

$\left( i_{s},1_{E}\right) $ is soft continuous.

Let $\{(X_{s},\tau _{s},E)\}_{s\in S}$ be a family of soft topological
spaces, $(Y,\tau ^{^{\prime }},E)$ be a soft topological space and $\left\{
f_{s}:(X_{s},\tau _{s},E)\rightarrow (Y,\tau ^{^{\prime }},E)\right\} _{s\in
S}$ be a family of soft mappings. We define the function $f=\underset{s\in S}%
{\nabla }f_{s}:\underset{s\in S}{\oplus }(X_{s},\tau _{s},E)\rightarrow
(Y,\tau ^{^{\prime }},E)$ by $f(x_{e})=f_{s}(x_{e})=\left( f_{s}(x)\right)
_{e},$ where each soft point $x_{e}\in \underset{s\in S}{\oplus }(X_{s},\tau
_{s},E)$ can belong to a unique soft topological space $(X_{s_{0}},\tau
_{s_{0}},E).$ If $\ f:\underset{s\in S}{\oplus }(X_{s},\tau
_{s},E)\rightarrow (Y,\tau ^{^{\prime }},E)$ is an any soft mappings, then $%
\underset{s\in S}{\nabla }f_{s}=f$ is satisfied for the family of soft
mappings $\left\{ f_{s}=f\circ i_{s}:(X_{s},\tau _{s},E)\rightarrow (Y,\tau
^{^{\prime }},E)\right\} _{s\in S}$.

\begin{theorem}
The soft mapping $\ f:\underset{s\in S}{\oplus }(X_{s},\tau
_{s},E)\rightarrow (Y,\tau ^{^{\prime }},E)$ is a soft continuous if and
only if $f_{s}=f\circ i_{s}:(X_{s},\tau _{s},E)\rightarrow (Y,\tau
^{^{\prime }},E)$ are soft continuous for each $s\in S.$
\end{theorem}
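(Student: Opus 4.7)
The plan is to treat the two directions separately, exploiting the definition of the sum topology $\tau$ on $\underset{s\in S}{\cup}X_s$, namely that $(F,E)\in\tau$ if and only if $(\left.F\right|_{X_s},E)\in\tau_s$ for every $s\in S$, together with the fact (already established in the excerpt) that each inclusion $(i_s,1_E)$ is soft continuous.

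For the forward direction ($\Longrightarrow$), I would simply observe that if $f$ is soft continuous and each $(i_s,1_E)$ is soft continuous, then the composition $f_s=f\circ i_s$ is soft continuous, because the composition of soft continuous mappings is soft continuous (a routine consequence of Definition of soft continuity via soft neighbourhoods, or equivalently via preimages of soft open sets). No subtlety here.

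For the backward direction ($\Longleftarrow$), the key computation is the compatibility between restriction along inclusions and preimage: for any soft set $(G,E)$ over $Y$, I claim
\begin{equation*}
\left.\bigl(f^{-1}(G,E)\bigr)\right|_{X_s} = (f\circ i_s)^{-1}(G,E) = f_s^{-1}(G,E).
\end{equation*}
This follows parameter-wise from the definition $\left.F\right|_{X_s}(e)=F(e)\cap X_s$ together with the disjointness hypothesis $X_{s_1}\cap X_{s_2}=\varnothing$ for $s_1\neq s_2$, which guarantees that intersecting with $X_s$ picks out exactly those elements of $f^{-1}(G(e))$ lying in $X_s$, i.e. the set $f_s^{-1}(G(e))$.

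Given this identity, the proof closes immediately: take any $(G,E)\in\tau'$; by hypothesis $f_s^{-1}(G,E)\in\tau_s$ for each $s$, hence $\left.\bigl(f^{-1}(G,E)\bigr)\right|_{X_s}\in\tau_s$ for every $s\in S$, and by the defining property of $\tau$ this means $f^{-1}(G,E)\in\tau$, so $f$ is soft continuous. The main (very mild) obstacle is just verifying the restriction-preimage identity cleanly, which hinges on the disjointness assumption built into the definition of the soft topological sum; everything else is a direct application of definitions.
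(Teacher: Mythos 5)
Your proposal is correct and follows essentially the same route as the paper: both directions hinge on the characterization of the sum topology via restrictions and on the identity $\left.\bigl(f^{-1}(F)\bigr)\right|_{X_s}=i_s^{-1}\bigl(f^{-1}(F)\bigr)=f_s^{-1}(F)$, which is exactly the computation the paper performs. The only difference is that you spell out the parameter-wise verification of this identity, which the paper leaves implicit.
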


\begin{proof}
$\Longrightarrow $ The proof is obvious.

$\Longleftarrow $ Let $(F,E)\in \tau ^{^{\prime }}$ be a soft open set. The
soft set $f^{-1}(F,E)$ belongs to the soft topology $\underset{s\in S}{%
\oplus }\tau _{s}$ if and only if the soft set $\left( \left.
f^{-1}(F)\right\vert _{X_{s}},E\right) $ belongs to $\tau _{s}.$Since

\begin{equation*}
\left( \left. f^{-1}(F)\right\vert _{X_{s}},E\right) =i_{s}^{-1}\left(
f^{-1}(F),E\right) =\left( i_{s}^{-1}\circ f^{-1}\right)
(F,E)=f_{s}^{-1}(F,E)\in \tau _{s},
\end{equation*}

$f$ is soft continuous.
\end{proof}

Let $\left\{ f_{s}:(X_{s},\tau _{s},E)\rightarrow (Y_{s},\tau _{s}^{^{\prime
}},E)\right\} _{s\in S}$ be a family of soft continuous mappings. We define
the mapping $f=\underset{s\in S}{\oplus }$ $f_{s}:\underset{s\in S}{\oplus }%
(X_{s},\tau _{s},E)\rightarrow \underset{s\in S}{\oplus }(Y_{s},\tau
_{s}^{^{\prime }},E)$ by $f(x_{e})=f_{s}(x_{e})$ where each soft point $%
x_{e}\in \underset{s\in S}{\oplus }(X_{s},\tau _{s},E)$ belongs to $%
(X_{s_{0}},\tau _{s_{0}},E).$ It is clear that if each $f_{s}$ is soft
continuous then $f$ is also soft continuous.

\begin{theorem}
Let $\{(X_{s},\tau _{s},E)\}_{s\in S}$ be a family of soft topological
spaces. Then

\begin{equation*}
\left( \underset{s\in S}{\dprod }X_{s},\tau _{e}\right) =\underset{s\in S}{%
\dprod }\left( X_{s},\tau _{s_{e}}\right) \text{ and }\left( \underset{s\in S%
}{\oplus }X_{s},\tau _{e}\right) =\underset{s\in S}{\oplus }\left(
X_{s},\tau _{s_{e}}\right)
\end{equation*}

are satisfied for each $e\in E.$
\end{theorem}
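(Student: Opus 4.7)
The plan is to verify both equalities by showing that the two topologies in question are generated by the same (sub)bases in the classical sense, exploiting the fact that all soft set operations used in the definitions commute with evaluation $(F,E) \mapsto F(e)$ at a fixed parameter $e \in E$.

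For the product identity, I would first recall that the soft product topology $\tau$ is generated by the subbase consisting of the soft sets $\left(p_{s_0},1_E\right)^{-1}(F_{s_0},E) = \left(F_{s_0}\times \prod_{s\neq s_0}\widetilde{X}_s, E\right)$ for $s_0 \in S$ and $(F_{s_0},E) \in \tau_{s_0}$. Evaluating at $e$ gives the classical set $F_{s_0}(e)\times \prod_{s\neq s_0}X_s$, and as $(F_{s_0},E)$ ranges over $\tau_{s_0}$ the slices $F_{s_0}(e)$ range over exactly $\tau_{s_0 e}$. Hence the subbase for $\tau_e$ coincides with the standard subbase for the classical product topology $\prod_{s\in S}\tau_{s_e}$, so the two topologies agree. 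The only point requiring a small justification is that passing from a soft subbase to its generated soft topology, then evaluating at $e$, gives the same result as evaluating the subbase at $e$ and then generating a classical topology; this follows because finite intersections and arbitrary unions of soft sets are defined pointwise in $e$, by Definitions 3 and 4.

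For the sum identity, I would argue directly from the defining equivalence $(F,E)\in \tau \iff (\left.F\right|_{X_s},E)\in \tau_s$ for every $s\in S$. A subset $U \subset \bigcup_s X_s$ lies in $\tau_e$ iff there exists $(F,E)\in \tau$ with $F(e)=U$. Using the definition of $\left.F\right|_{X_s}$, this is equivalent to $\left.F\right|_{X_s}(e) = F(e)\cap X_s = U\cap X_s$ being soft-open at parameter $e$ in each component, i.e. $U\cap X_s \in \tau_{s_e}$ for every $s$, which is precisely the condition that $U$ belongs to the classical disjoint sum topology $\bigoplus_{s\in S}\tau_{s_e}$.

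I do not anticipate a substantive obstacle; the essential content of the theorem is that soft operations are computed componentwise in $e$, so the functor $(F,E)\mapsto F(e)$ interacts well with product and sum constructions. The mildly delicate step is the product direction, where one must be careful that generating a soft topology from a subbase and then slicing at $e$ yields the same classical topology as slicing the subbase at $e$ and then generating; I would make this explicit by noting that a general soft basic open set $\bigl(F_{s_1}\times\cdots\times F_{s_n}\times \prod_{s\neq s_1,\dots,s_n}\widetilde{X}_s, E\bigr)$ evaluates at $e$ to $F_{s_1}(e)\times\cdots\times F_{s_n}(e)\times \prod_{s\neq s_1,\dots,s_n}X_s$, and arbitrary unions of soft open sets evaluate at $e$ to the corresponding unions of their slices.
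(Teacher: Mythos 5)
Your proposal is correct and follows essentially the same route as the paper: both arguments identify the $e$-slices of the (sub)basic soft open sets of the product with the standard (sub)basic open sets of $\prod_{s\in S}\tau_{s_e}$, using that $\tau_{s_e}=\{F_s(e):(F_s,E)\in\tau_s\}$ in both directions. If anything you are slightly more careful than the paper, which checks only basic open sets and dismisses the sum case with ``can be proven in the same way,'' whereas you make explicit the commutation of slicing with arbitrary unions and spell out the sum argument via the restriction criterion.
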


\begin{proof}
We should show that $\tau _{e}=\underset{s\in S}{\dprod }\left( \tau
_{s}\right) _{e}$. Let us take any set $U$ from $\tau _{e}.$ From the
definition of the topology $\tau _{e}$, there exist a soft open set 

\begin{equation*}
\left( (F_{s_{1}},E)\times ...\times (F_{s_{n}},E)\times \underset{s\neq
s_{1}...s_{n}}{\dprod }\widetilde{X}_{s}\right) 
\end{equation*}

such that the set $U=\left( F_{s_{1}}(e)\times ...\times F_{s_{n}}(e)\times 
\underset{s\neq s_{1}...s_{n}}{\dprod }\widetilde{X}_{s}\right) $ belongs to
the topology $\underset{s\in S}{\dprod }\left( \tau _{s}\right) _{e}.$

Conversely, let $\left( U_{s_{1}}\times ...\times U_{s_{n}}\times \underset{%
s\neq s_{1}...s_{n}}{\dprod }\widetilde{X}_{s}\right) \in \underset{s\in S}{%
\dprod }\left( \tau _{s}\right) _{e}$. Then from the definition of the
topology $\left( \tau _{s_{i}}\right) _{e}$, there exist soft open sets $%
(F_{s_{1}},E),...,(F_{s_{n}},E)$ such that $%
F_{s_{1}}(e)=U_{s_{1}},...,F_{s_{n}}(e)=U_{s_{n}}$. Then

\begin{equation*}
U_{s_{1}}\times ...\times U_{s_{n}}\times \underset{s\neq s_{1}...s_{n}}{%
\dprod }\widetilde{X}_{s}=F_{s_{1}}(e)\times ...\times F_{s_{n}}(e)\times 
\underset{s\neq s_{1}...s_{n}}{\dprod }\widetilde{X}_{s}\in \tau _{e}.
\end{equation*}

The topological sum can be proven in the sameway.
\end{proof}

Let $\left( X,\tau ,E\right) $ and $\left( Y,\tau ^{^{\prime }},E\right) $
be two soft topological spaces. $Y^{X}$ is denoted the all soft continuous
mappings from the soft topological space $\left( X,\tau ,E\right) $ to the
soft topological space $\left( Y,\tau ^{^{\prime }},E\right) $, i. e.,

\begin{equation*}
Y^{X}=\left\{ (f,1_{E}):\left( X,\tau ,E\right) \rightarrow \left( Y,\tau
^{^{\prime }},E\right) \mid (f,1_{E})-\text{a soft continuous map}\right\} .
\end{equation*}

If $(F,E)$ and $(G,E)$ are two soft set over $X$ and $Y$, respectively then
we define the soft set $\left( G^{F},E\right) $ over $Y^{X}$ as follows;

\begin{equation*}
G^{F}(e)=\left\{ (f,1_{E}):\left( X,\tau ,E\right) \rightarrow \left( Y,\tau
^{^{\prime }},E\right) \mid f\left( F(e)\right) \subset G(e)\right\} \text{
for each }e\in E.
\end{equation*}

Now, let $x_{\alpha }\in \left( X,\tau ,E\right) $ be an any soft point. We
define the soft mapping $e_{x_{\alpha }}:\left( Y^{X},E\right) \rightarrow
\left( Y,\tau ^{^{\prime }},E\right) $ by $e_{x_{\alpha }}(f)=f(x_{\alpha
})=\left( f(x)\right) _{\alpha }$. This mapping is called an evaluation map.
For the soft set $(G,E)$ over $Y,$ $e_{x_{\alpha }}^{-1}(G,E)=(G^{x_{\alpha
}},E)$ is satisfied.\ The soft topology that is generated from the soft sets 
$\left\{ (G^{x_{\alpha }},E)\mid (G,E)\in \tau ^{^{\prime }}\right\} $ as a
subbase is called pointwise soft topology and denoted by $\tau _{p}.$

\begin{definition}
$\left( Y^{X},\tau _{p},E\right) $ is called a pointwise soft function space
(briefly $PISFS$).
\end{definition}

\begin{remark}
The evaluation mapping $e_{x_{\alpha }}:\left( Y^{X},\tau _{p},E\right)
\rightarrow \left( Y,\tau ^{^{\prime }},E\right) $ is a soft continuous
mapping for each soft point $e_{x_{\alpha }}\in \left( X,\tau ,E\right) $.
\end{remark}

\begin{proposition}
A soft map $g:(Z,\eta ,E)\rightarrow \left( Y^{X},\tau _{p},E\right) $,
where $(Z,\eta ,E)$ is a soft topological space, is a soft continuous
mapping if and only if the soft mapping $e_{x_{\alpha }}\circ g:(Z,\eta
,E)\rightarrow \left( Y,\tau ^{^{\prime }},E\right) $ is a soft continuous
mapping for each $x_{\alpha }\in \left( X,\tau ,E\right) .$
\end{proposition}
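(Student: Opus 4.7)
My plan is to exploit the fact that $\tau_p$ is defined as the soft topology generated by the subbase $\delta=\{(G^{x_\alpha},E)\mid x_\alpha\in (X,\tau,E),\ (G,E)\in\tau'\}$, together with the identity $(G^{x_\alpha},E)=e_{x_\alpha}^{-1}(G,E)$ noted in the text. This is a universal property argument for an initial-type soft topology, so I would split into the two implications and let the subbase do all the work.

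For the forward direction, I would simply invoke the Remark: each evaluation $e_{x_\alpha}\colon(Y^X,\tau_p,E)\to(Y,\tau',E)$ is soft continuous. If $g$ is soft continuous, then the composition $e_{x_\alpha}\circ g$ is a composition of soft continuous maps and hence soft continuous for every $x_\alpha\in(X,\tau,E)$. I would state this briefly and move on.

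For the converse, I would recall that a soft map is soft continuous iff the preimage of every soft subbasic open set is soft open (this follows because preimages commute with soft unions and soft finite intersections, so continuity on a subbase propagates to the generated topology). Thus it suffices to check that for each subbasic element $(G^{x_\alpha},E)\in\delta$ with $(G,E)\in\tau'$, the set $g^{-1}(G^{x_\alpha},E)$ belongs to $\eta$. Using $(G^{x_\alpha},E)=e_{x_\alpha}^{-1}(G,E)$, I would write
\begin{equation*}
g^{-1}(G^{x_\alpha},E)=g^{-1}\bigl(e_{x_\alpha}^{-1}(G,E)\bigr)=(e_{x_\alpha}\circ g)^{-1}(G,E),
\end{equation*}
which lies in $\eta$ by the hypothesis that $e_{x_\alpha}\circ g$ is soft continuous. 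This completes the proof.

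There is no real obstacle here; the argument is a direct application of the universal property of a topology generated by a subbase, transported to the soft setting. The only thing that needs care is making explicit the (standard) fact that in the soft framework, verifying continuity on a subbase implies continuity on the whole topology, since the authors have not stated this as a separate lemma. If desired, I would insert a one-line justification using the formulas $g^{-1}(\bigcup_i A_i)=\bigcup_i g^{-1}(A_i)$ and $g^{-1}(A_1\cap\cdots\cap A_n)=g^{-1}(A_1)\cap\cdots\cap g^{-1}(A_n)$ for preimages of soft sets, which hold componentwise in $E$.
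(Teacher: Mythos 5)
The paper states this proposition without giving any proof, so there is nothing to compare line by line; your argument supplies exactly the proof the authors evidently had in mind. The forward direction via the Remark on continuity of the evaluation maps, and the converse via the subbase criterion together with the identity $g^{-1}\left(e_{x_{\alpha}}^{-1}(G,E)\right)=\left(e_{x_{\alpha}}\circ g\right)^{-1}(G,E)$, is correct and matches how the paper itself handles analogous continuity checks (e.g.\ the proof of its first theorem on products, which also verifies continuity only on subbasic sets). Your remark that the subbase-suffices criterion deserves an explicit one-line justification is well taken, since the paper never isolates it as a lemma even though it relies on it repeatedly.
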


\begin{theorem}
If the soft topological space $\left( Y,\tau ^{^{\prime }},E\right) $ is a
soft $T_{i}-$space for each $i=0,1,2$ then the soft space $\left( Y^{X},\tau
_{p},E\right) $ is also a soft $T_{i}-$space.
\end{theorem}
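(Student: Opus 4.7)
I plan to reduce each soft $T_{i}$ property for $(Y^{X},\tau _{p},E)$ to the corresponding property for $(Y,\tau ^{\prime },E)$, exploiting that the subbasic soft open sets $(G^{x_{\alpha }},E)$ are precisely the pre-images $e_{x_{\alpha }}^{-1}(G,E)$ of soft open sets of $Y$ under the (soft continuous) evaluation maps, so that separators in $Y$ can be pulled back.

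Let $(f_{\alpha },E)$ and $(g_{\beta },E)$ be two distinct soft points of $Y^{X}$, so that either $f\neq g$ or $\alpha \neq \beta $. The first step is to manufacture two distinct soft points of $Y$ from them. If $\alpha =\beta $, then $f\neq g$ supplies some $x_{0}\in X$ with $f(x_{0})\neq g(x_{0})$, so $(f(x_{0}))_{\alpha }$ and $(g(x_{0}))_{\alpha }$ are distinct in $Y$. If $\alpha \neq \beta $, any $x_{0}\in X$ works: the soft points $(f(x_{0}))_{\alpha }$ and $(g(x_{0}))_{\beta }$ are automatically distinct in $Y$ because their parameters already differ.

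Next I apply the soft $T_{i}$ hypothesis on $Y$ to these two distinct soft points and transfer the resulting separator(s) through the evaluation maps. A direct computation from the definition gives $G^{x_{0}\gamma }(\gamma )=\{h\in Y^{X}:h(x_{0})\in G(\gamma )\}$ and $G^{x_{0}\gamma }(e)=Y^{X}$ for $e\neq \gamma $, so $(f_{\alpha },E)\in (G^{x_{0}\alpha },E)\Longleftrightarrow f(x_{0})\in G(\alpha )$, while $(f_{\alpha },E)\in (G^{x_{0}\gamma },E)$ holds automatically whenever $\gamma \neq \alpha $. Choosing the superscript parameter $\gamma $ equal to the parameter of the soft point I wish to exclude, the $T_{0}$ and $T_{1}$ separators of $Y$ translate directly into $T_{0}$ and $T_{1}$ separators of $(f_{\alpha },E)$ and $(g_{\beta },E)$ in $\tau _{p}$; this is the routine part.

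The main obstacle is $T_{2}$: soft disjointness $(G_{1},E)\cap (G_{2},E)=\Phi $ in $Y$ only forces the pulled-back neighbourhoods $(G_{1}^{x_{0}\gamma _{1}},E)$ and $(G_{2}^{x_{0}\gamma _{2}},E)$ to intersect trivially at the parameters $\gamma _{1},\gamma _{2}$, whereas each of them equals $\widetilde{Y^{X}}$ at every other parameter, spoiling global soft disjointness. I plan to remedy this by intersecting each candidate neighbourhood with finitely many further subbasic sets of the form $(\Phi ^{y_{e}},E)$ (for arbitrary $y\in X$), whose value is $\varnothing $ at $e$ and $\widetilde{Y^{X}}$ elsewhere, thereby killing the extraneous $Y^{X}$ values at every parameter different from the one actually needed. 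Provided $E$ is finite (so the required intersections stay in the base of $\tau _{p}$), this yields genuinely soft disjoint basic open neighbourhoods of $(f_{\alpha },E)$ and $(g_{\beta },E)$ and completes the $T_{2}$ case.
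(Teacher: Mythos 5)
Your $T_{0}$ and $T_{1}$ arguments follow the same route as the paper --- pass from $f_{\alpha }\neq g_{\beta }$ to a pair of distinct soft points $\left( f(x^{0})\right) _{\alpha }\neq \left( g(x^{0})\right) _{\beta }$ of $Y$ and pull separating soft open sets back through evaluation maps --- but you execute it more carefully. Your rule ``take the superscript parameter equal to the parameter of the soft point to be excluded'' is the correct one: the paper's displayed choice $\left( F_{1}^{x_{\alpha }^{0}},E\right) $, $\left( F_{2}^{x_{\beta }^{0}},E\right) $ fails to exclude the other point whenever $\alpha \neq \beta $, since $F_{1}^{x_{\alpha }^{0}}(\beta )=Y^{X}$ for every $\beta \neq \alpha $.

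For $T_{2}$ you have put your finger on a genuine defect that the paper passes over in a single sentence. As you compute, every subbasic soft set $(G^{x_{\gamma }},E)$ equals $\widetilde{Y^{X}}$ at every parameter other than $\gamma $, so the two pulled-back neighbourhoods the paper offers are never soft disjoint once $E$ has at least two elements; the assertion that ``the conditions of soft $T_{i}$-space are satisfied for these neighbours'' is simply not justified for $i=2$. Your repair --- intersecting with the subbasic sets $(\Phi ^{x_{e}^{0}},E)$, which are $\varnothing $ at $e$ and $\widetilde{Y^{X}}$ elsewhere --- does yield soft disjoint basic neighbourhoods, but, as you yourself note, only when $E$ is finite, and the theorem carries no such hypothesis. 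So your proof does not establish the stated $T_{2}$ claim in general; and in fact no proof can, under the disjoint-neighbourhood definition of soft $T_{2}$: every nonempty member of $\tau _{p}$, being a union of finite intersections of subbasic sets, equals $\widetilde{Y^{X}}$ at all but finitely many parameters, so for infinite $E$ no two nonempty soft open sets of $\left( Y^{X},\tau _{p},E\right) $ are soft disjoint. The honest conclusion is that the $T_{2}$ part requires either the extra hypothesis that $E$ is finite (your argument then completes it) or a weaker, parameterwise notion of soft $T_{2}$; the paper's own proof does not close this gap.
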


\begin{proof}
The soft points of the soft topological space $\left( Y^{X},\tau
_{p},E\right) $ denoted by $\left( f_{\alpha },E\right) $ i. e., if $\beta
\neq \alpha $ then $f_{\alpha }(\beta )=\varnothing $ and if $\beta =\alpha $
then $f_{\alpha }(\beta )=f.$ Now, let $f_{\alpha }\neq g_{\beta }$ be two
soft points. Then it should be $f\neq g$ or $\alpha \neq \beta $. If $f=g,$
then $\left( f(x)\right) _{\alpha }\neq \left( g(x)\right) _{\beta }\in
\left( Y,\tau ^{^{\prime }},E\right) $ for each $x\in X.$ If $f\neq g,$ then 
$f(x^{0})\neq g(x^{0})$ suct that $x^{0}\in X.$ Therefore, $\left(
f(x^{0})\right) _{\alpha }\neq \left( g(x^{0})\right) _{\beta }$ is
satisfied. In both cases, $\left( f(x^{0})\right) _{\alpha }\neq \left(
g(x^{0})\right) _{\beta }\in \left( Y,\tau ^{^{\prime }},E\right) $ is
satisfied for at least one $x_{0}\in X.$ Since $\left( Y,\tau ^{^{\prime
}},E\right) $ is a soft $T_{i}-$space, there exists soft open sets $%
(F_{1},E),(F_{2},E)\in \tau ^{^{\prime }}$ where the condition of the soft $%
T_{i}-$space is satisfied. Then the soft open sets $\left( F_{1}^{x_{\alpha
}^{0}},E\right) =e_{x_{\alpha }^{0}}^{-1}(F_{1},E)$ and $\left(
F_{2}^{x_{\beta }^{0}},E\right) =e_{x_{\beta }^{0}}^{-1}(F_{2},E)$ are
neighbours of soft points $f_{\alpha }$ and $g_{\beta },$ respectivelly
where the conditions of soft $T_{i}-$space are satisfied for these
neighbours.
\end{proof}

Now, we construct relationships betwen some function spaces. Let $%
\{(X_{s},\tau _{s},E)\}_{s\in S}$ be a family of pairwise disjoint soft
topological spaces, $\left( Y,\tau ^{^{\prime }},E\right) $ be a soft
topological spaces and $\underset{s\in S}{\dprod }\left( Y^{X_{s}},\tau
_{s_{p}},E\right) ,$ $\underset{s\in S}{\oplus }(X_{s},\tau _{s},E)$ be a
product and sum of soft topological spaces, respectively. Define

\begin{equation*}
\nabla :\underset{s\in S}{\dprod }\left( Y^{X_{s}},\tau _{s_{p}},E\right)
\rightarrow \left( Y^{\underset{s\in S}{\oplus }X_{s}},\left( \tau
^{^{\prime ^{\underset{s\in S}{\oplus }}}}\right) _{p},E\right)
\end{equation*}

such that $\forall \left\{ \left( f_{s},1_{E}\right) \right\} \in \underset{%
s\in S}{\dprod }Y^{X_{s}},$ $\forall x_{\alpha }\in \underset{s\in S}{\oplus 
}(X_{s},\tau _{s},E),$ $\underset{s\in S}{\nabla }\left( \left\{
f_{s}\right\} \right) \left( x_{\alpha }\right) =f_{s_{0}}\left( x_{\alpha
}\right) =\left( f_{s_{0}}\left( x\right) \right) _{\alpha }$ , where $%
x_{\alpha }$ belongs to unique $(X_{s_{0}},\tau _{s_{0}},E).$ We define the
mapping

\begin{equation*}
\nabla ^{-1}:\left( Y^{\underset{s\in S}{\oplus }X_{s}},\left( \tau
^{^{\prime ^{\underset{s\in S}{\oplus }}}}\right) _{p},E\right) \rightarrow 
\underset{s\in S}{\dprod }\left( Y^{X_{s}},\tau _{s_{p}},E\right)
\end{equation*}

by $\nabla ^{-1}(f)=\left\{ f\circ i_{s}=\left. f\right\vert
_{X_{s}}:X_{s}\rightarrow Y\right\} \in \underset{s\in S}{\dprod }\left(
Y^{X_{s}},\tau _{s_{p}},E\right) $ for each $f:\underset{s\in S}{\oplus }%
X_{s}\rightarrow Y$. It is clear that the mapping $\nabla ^{-1}$ is an
inverse of the mapping $\nabla .$

\begin{theorem}
The mapping

\begin{equation*}
\nabla :\underset{s\in S}{\dprod }\left( Y^{X_{s}},\tau _{s_{p}},E\right)
\rightarrow \left( Y^{\underset{s\in S}{\oplus }X_{s}},\left( \tau
^{^{\prime ^{\underset{s\in S}{\oplus }}}}\right) _{p},E\right)
\end{equation*}

is a soft homeomorphism in the pointwise soft topology.
\end{theorem}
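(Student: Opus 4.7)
The plan is to take for granted the set-theoretic fact already noted in the excerpt, namely that $\nabla^{-1}$ is genuinely the inverse of $\nabla$, and reduce the homeomorphism claim to verifying soft continuity in both directions. Both verifications will be routed through the two universal properties proved earlier: the Proposition characterising soft continuity into a pointwise soft function space, and the Theorem characterising soft continuity into a product of soft topological spaces. In each case the reduction terminates at an evaluation map, whose soft continuity is supplied by the Remark.

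First I would prove $\nabla$ is soft continuous. By the Proposition, it suffices to show that for every soft point $x_{\alpha}\in\underset{s\in S}{\oplus}(X_{s},\tau_{s},E)$ the composition $e_{x_{\alpha}}\circ\nabla:\underset{s\in S}{\dprod}(Y^{X_{s}},\tau_{s_{p}},E)\rightarrow(Y,\tau^{\prime},E)$ is soft continuous. Let $(X_{s_{0}},\tau_{s_{0}},E)$ be the unique summand to which $x_{\alpha}$ belongs. Unpacking the definition of $\nabla$ yields
\[
\bigl(e_{x_{\alpha}}\circ\nabla\bigr)\bigl(\{f_{s}\}_{s\in S}\bigr)=f_{s_{0}}(x_{\alpha})=\bigl(e_{x_{\alpha}}\circ p_{s_{0}}\bigr)\bigl(\{f_{s}\}_{s\in S}\bigr),
\]
where $p_{s_{0}}$ is the soft projection onto the factor $Y^{X_{s_{0}}}$ and the right-hand $e_{x_{\alpha}}$ is evaluation at $x_{\alpha}$ on $Y^{X_{s_{0}}}$. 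The projection is soft continuous by construction of the product soft topology, and the evaluation is soft continuous by the Remark; hence the composition is.

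Next I would prove $\nabla^{-1}$ is soft continuous. By the Theorem for product soft topologies, it is enough to show that for each $s_{0}\in S$ the composition $p_{s_{0}}\circ\nabla^{-1}:(Y^{\underset{s\in S}{\oplus}X_{s}},(\tau^{\prime\oplus})_{p},E)\rightarrow(Y^{X_{s_{0}}},\tau_{s_{0_{p}}},E)$ is soft continuous. Applying the Proposition once more, this reduces to showing $e_{x_{\alpha}}\circ p_{s_{0}}\circ\nabla^{-1}$ is soft continuous for every soft point $x_{\alpha}\in(X_{s_{0}},\tau_{s_{0}},E)$. A direct computation using the definition $\nabla^{-1}(f)=\{f\circ i_{s}\}_{s\in S}$ gives
\[
\bigl(e_{x_{\alpha}}\circ p_{s_{0}}\circ\nabla^{-1}\bigr)(f)=(f\circ i_{s_{0}})(x_{\alpha})=f(x_{\alpha}),
\]
so this composition is just the evaluation map at $x_{\alpha}$ on $Y^{\underset{s\in S}{\oplus}X_{s}}$, again soft continuous by the Remark.

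The substantive work is bookkeeping: one must be consistent about identifying a soft point $x_{\alpha}\in X_{s_{0}}$ with its image $i_{s_{0}}(x_{\alpha})\in\underset{s\in S}{\oplus}X_{s}$, and one must invoke the pairwise disjointness hypothesis $X_{s_{1}}\cap X_{s_{2}}=\varnothing$ to guarantee that the \emph{unique} summand $s_{0}$ appearing in the definition of $\nabla$ is well-defined. I expect this to be the only real pitfall; once these identifications are set up carefully, both continuity statements follow mechanically from the two universal properties, and no further analytic input is needed.
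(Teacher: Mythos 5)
Your proof is correct, but it takes a different route from the paper's. The paper argues directly at the level of subbases: it computes $\nabla^{-1}\left(e_{x_{\alpha}}^{-1}(F,E)\right)$ for a subbasic set of the pointwise topology on $Y^{\oplus X_{s}}$ and identifies it as a basic open set of the product, and then computes $\nabla$ of a subbasic set of the product and identifies it as a subbasic set of the pointwise topology on $Y^{\oplus X_{s}}$ --- in effect showing that the bijection $\nabla$ matches the two subbases, which gives continuity and openness simultaneously. You instead chain the two universal properties already stated in the paper (the Proposition characterising soft continuity of maps into $\left(Y^{X},\tau_{p},E\right)$ via postcomposition with evaluations, and the Theorem characterising soft continuity of maps into a product via postcomposition with projections), reducing both directions to the soft continuity of evaluation maps (the Remark) and of projections. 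Your reductions are legitimate: both the Proposition and the Remark are stated results of the paper, the identities $e_{x_{\alpha}}\circ\nabla=e_{x_{\alpha}}\circ p_{s_{0}}$ and $e_{x_{\alpha}}\circ p_{s_{0}}\circ\nabla^{-1}=e_{i_{s_{0}}(x_{\alpha})}$ are exactly right, and you correctly flag that pairwise disjointness is what makes the index $s_{0}$ well defined. What your approach buys is modularity and brevity --- no explicit manipulation of subbasic sets, and the argument would survive a change in how the subbases are presented; what the paper's computation buys is the slightly stronger structural fact that $\nabla$ carries subbase to subbase, from which openness of $\nabla$ is visible without ever invoking the inverse map's continuity criterion. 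The only implicit ingredient you use beyond what the paper states is that a composition of soft continuous maps is soft continuous, which the paper also uses tacitly throughout.
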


\begin{proof}
To prove the theorem, it is sufficient to show that the mappings $\nabla $
and $\nabla ^{-1}$ are soft continuous. For this, we need to show that the
soft set $\nabla ^{-1}\left( e_{x_{\alpha }}^{-1}(F,E)\right) $ is a soft
open set, where each $e_{x_{\alpha }}^{-1}(F,E)$ belongs to a soft subbase
of the soft space $\left( Y^{\underset{s\in S}{\oplus }X_{s}},\left( \tau
^{^{\prime ^{\underset{s\in S}{\oplus }}}}\right) _{p},E\right) .$

\begin{equation*}
e_{x_{\alpha }}^{-1}(F,E)=\left\{ f:\underset{s\in S}{\oplus }%
X_{s}\rightarrow Y\mid f\left( x_{\alpha }\right) \in (F,E)\right\} =\left\{
f_{s_{0}}:X_{s_{0}}\rightarrow Y\mid f_{s_{0}}\left( x_{\alpha }\right) \in
(F,E)\right\} .
\end{equation*}

Since

\begin{eqnarray*}
\nabla ^{-1}\left( e_{x_{\alpha }}^{-1}(F,E)\right) &=&\nabla ^{-1}\left(
\left\{ f_{s_{0}}:X_{s_{0}}\rightarrow Y\mid f_{s_{0}}\left( x_{\alpha
}\right) \in (F,E)\right\} \right) \\
&=&\left\{ f_{s_{0}}:X_{s_{0}}\rightarrow Y\mid f_{s_{0}}\left( x_{\alpha
}\right) \in (F,E)\right\} \times \underset{s\neq s_{0}}{\dprod }\left(
Y^{X_{s}},\tau _{s_{p}},E\right)
\end{eqnarray*}

is the last soft set, $\nabla ^{-1}\left( e_{x_{\alpha }}^{-1}(F,E)\right) $
is a soft open set on the product space $\underset{s\in S}{\dprod }\left(
Y^{X_{s}},\tau _{s_{p}},E\right) .$

Now, we prove that the mapping $\nabla ^{-1}:\left( Y^{\underset{s\in S}{%
\oplus }X_{s}},\left( \tau ^{^{\prime ^{\underset{s\in S}{\oplus }}}}\right)
_{p},E\right) \rightarrow \underset{s\in S}{\dprod }\left( Y^{X_{s}},\tau
_{s_{p}},E\right) $ is soft continuous. Indeed, for each the soft set $%
\left( e_{x_{\alpha }}^{-1}(F,E)\right) _{s_{0}}(F,E)\times \underset{s\neq
s_{0}}{\dprod }\left( Y^{X_{s}},\tau _{s_{p}},E\right) $ belongs to the
subbase of the product space $\underset{s\in S}{\dprod }\left(
Y^{X_{s}},\tau _{s_{p}},E\right) ,$

\begin{equation*}
\left( e_{x_{\alpha }}^{-1}\right) _{s_{0}}(F,E)\times \underset{s\neq s_{0}}%
{\dprod }\left( Y^{X_{s}},\tau _{s_{p}},E\right) =\left\{ \left\{
f_{s}\right\} \in \underset{s\in S}{\dprod }Y^{X_{s}}\mid f_{s_{0}}\left(
x_{\alpha }\right) \in (F,E)\right\}
\end{equation*}

is satisfied.

Since the set

\begin{eqnarray*}
&&\left( \nabla ^{-1}\right) ^{-1}\left( \left( e_{x_{\alpha }}^{-1}\right)
_{s_{0}}(F,E)\times \underset{s\neq s_{0}}{\dprod }\left( Y^{X_{s}},\tau
_{s_{p}},E\right) \right)  \\
&=&\nabla \left( \left( e_{x_{\alpha }}^{-1}\right) _{s_{0}}(F,E)\times 
\underset{s\neq s_{0}}{\dprod }\left( Y^{X_{s}},\tau _{s_{p}},E\right)
\right)  \\
&=&\left\{ \underset{s\in S}{\nabla }f_{s}:f_{s_{0}}\left( x_{\alpha
}\right) \in (F,E)\right\} 
\end{eqnarray*}

belongs to subbase of the soft topological space $\left( Y^{\underset{s\in S}%
{\oplus }X_{s}},\left( \tau ^{^{\prime ^{\underset{s\in S}{\oplus }%
}}}\right) _{p},E\right) ,$ the mapping $\nabla ^{-1}$ is soft continuous.
Thus, the mapping

\begin{equation*}
\nabla :\underset{s\in S}{\dprod }\left( Y^{X_{s}},\tau _{s_{p}},E\right)
\rightarrow \left( Y^{\underset{s\in S}{\oplus }X_{s}},\left( \tau
^{^{\prime ^{\underset{s\in S}{\oplus }}}}\right) _{p},E\right)
\end{equation*}

is a soft homeomorphism.
\end{proof}

Now, $\{(Y_{s},\tau _{s}^{^{\prime }},E)\}_{s\in S}$ be the family of soft
topological spaces, $\left( X,\tau ,E\right) $ be a soft topological space.
We define mapping

\begin{equation*}
\Delta :\underset{s\in S}{\dprod }\left( Y^{X_{s}},\tau _{s_{p}}^{^{\prime
}},E\right) \rightarrow \left( \left( \underset{s\in S}{\dprod }Y_{s}\right)
^{X},\left( \underset{s\in S}{\dprod }\tau _{s}^{^{\prime }}\right)
_{p},E\right)
\end{equation*}

by the rule $\forall \left\{ f_{s}:X\rightarrow Y_{s}\right\} \in \underset{%
s\in S}{\dprod }\left( Y^{X_{s}},\tau _{s_{p}}^{^{\prime }},E\right) ,$ $%
\Delta \left\{ f_{s}\right\} =\underset{s\in S}{\Delta }f_{s}$.

Let the mapping $\Delta ^{-1}=\left( \left( \underset{s\in S}{\dprod }%
Y_{s}\right) ^{X},\left( \underset{s\in S}{\dprod }\tau _{s}^{^{\prime
}}\right) _{p},E\right) \rightarrow \underset{s\in S}{\dprod }\left(
Y^{X_{s}},\tau _{s_{p}}^{^{\prime }},E\right) $ be

\begin{equation*}
\Delta ^{-1}(f)=\left\{ p_{s}\circ f=f_{s}:X_{s}\rightarrow Y\right\}
\end{equation*}

for each $f\in \left( \left( \underset{s\in S}{\dprod }Y_{s}\right)
^{X},\left( \underset{s\in S}{\dprod }\tau _{s}^{^{\prime }}\right)
_{p},E\right) .$ It is clear that the mapping $\Delta ^{-1}$ is an inverse
of the mapping $\Delta .$

\begin{theorem}
The mapping 
\begin{equation*}
\Delta :\underset{s\in S}{\dprod }\left( Y^{X_{s}},\tau _{s_{p}}^{^{\prime
}},E\right) \rightarrow \left( \left( \underset{s\in S}{\dprod }Y_{s}\right)
^{X},\left( \underset{s\in S}{\dprod }\tau _{s}^{^{\prime }}\right)
_{p},E\right)
\end{equation*}

is a soft homeomorphism in the pointwise soft topology.
\end{theorem}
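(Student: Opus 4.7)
The plan is to follow the same blueprint as the proof of the previous theorem on $\nabla$: note that $\Delta$ is already a bijection, with $\Delta^{-1}$ explicitly given just before the statement, and then verify that both $\Delta$ and $\Delta^{-1}$ carry soft subbasic open sets to soft open sets. Since soft continuity only needs to be checked on a soft subbase, this suffices to conclude $\Delta$ is a soft homeomorphism.

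First I would pick convenient soft subbases at both ends. On the target $\left(\left(\underset{s\in S}{\dprod} Y_{s}\right)^{X},\left(\underset{s\in S}{\dprod}\tau'_{s}\right)_{p},E\right)$ the pointwise topology has subbase $\{e_{x_\alpha}^{-1}(G,E) : x_\alpha\in X,\ (G,E)\in\underset{s\in S}{\dprod}\tau'_{s}\}$; since the product soft topology on $\underset{s\in S}{\dprod}Y_{s}$ is itself generated by the family $p_{s_0}^{-1}(G_{s_0},E)$ with $s_0\in S$ and $(G_{s_0},E)\in\tau'_{s_0}$, it suffices to consider $(G,E)$ of this subbasic form. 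On the domain $\underset{s\in S}{\dprod}\left(Y_{s}^{X},\tau'_{s_p},E\right)$, a natural soft subbase consists of the sets $\left(e_{x_\alpha}^{-1}(G_{s_0},E)\right)_{s_0}\times\underset{s\neq s_0}{\dprod}\left(Y_{s}^{X},\tau'_{s_p},E\right)$.

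The engine of the proof is then the identity
\begin{equation*}
\Delta\!\left(\left(e_{x_\alpha}^{-1}(G_{s_0},E)\right)_{s_0}\times\underset{s\neq s_0}{\dprod}\left(Y_{s}^{X},\tau'_{s_p},E\right)\right)\;=\;e_{x_\alpha}^{-1}\!\left(p_{s_0}^{-1}(G_{s_0},E)\right),
\end{equation*}
which follows by unwinding both sides: the left is the image under $\underset{s\in S}{\bigtriangleup}$ of tuples $\{f_s\}$ with $f_{s_0}(x_\alpha)\in(G_{s_0},E)$, while the right consists of maps $f:X\to\underset{s\in S}{\dprod} Y_{s}$ with $(p_{s_0}\circ f)(x_\alpha)\in(G_{s_0},E)$; these coincide because of the elementary relation $p_{s_0}\circ\underset{s\in S}{\bigtriangleup} f_s=f_{s_0}$. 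Read in one direction this shows $\Delta$ sends subbasic opens of the domain to subbasic opens of the codomain, yielding soft continuity of $\Delta^{-1}$; read in the other, that $\Delta$-preimages of subbasic opens are subbasic opens, yielding soft continuity of $\Delta$.

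The main obstacle is purely bookkeeping: three layered constructions (the pointwise soft topology, the product soft topology, and the soft subbase of each) must be disentangled without confusing indices, and one must justify the passage to the smaller generating family $\{p_{s_0}^{-1}(G_{s_0},E)\}$ when describing the subbase of the pointwise topology in the codomain. The only genuine content, however, is the pointwise identity $p_{s_0}\circ\underset{s\in S}{\bigtriangleup} f_s=f_{s_0}$ together with the definition of the evaluation map $e_{x_\alpha}$; everything else is notational unpacking.
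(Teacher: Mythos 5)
Your proposal is correct and follows essentially the same route as the paper: both exploit the explicit bijection $\Delta^{-1}$ and verify that generating soft open sets are carried to soft open sets in each direction, with the same key identity $p_{s_{0}}\circ \underset{s\in S}{\bigtriangleup }f_{s}=f_{s_{0}}$ doing all the work. The only (immaterial) differences are that you check single subbasic sets where the paper checks basic sets with $k$ factors, and you phrase the two verifications as soft continuity of $\Delta$ and $\Delta^{-1}$ where the paper phrases them as soft openness.
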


\begin{proof}
Since $\Delta $ is bijective mapping, to prove the theorem it is sufficient
to show that the mappings $\Delta $ and $\Delta ^{-1}$ are soft open. First,
we show that the mapping $\Delta $ is soft open. Let us take an arbitrary
soft set

\begin{equation*}
\left( e_{x_{\alpha _{1}}}^{-1}\right) _{s_{1}}(F_{s_{1}},E)\times ...\times
\left( e_{x_{\alpha _{k}}}^{-1}\right) _{s_{k}}(F_{s_{k}},E)\times \left( 
\underset{s\neq s_{1}...s_{k}}{\dprod }Y_{s}\right) ^{X}
\end{equation*}

belongs to the base of the product space $\underset{s\in S}{\dprod }\left(
Y^{X_{s}},\tau _{s_{p}}^{^{\prime }},E\right) .$ Since the soft set

\begin{eqnarray*}
&&\Delta \left( \left( e_{x_{\alpha _{1}}}^{-1}\right)
_{s_{1}}(F_{s_{1}},E)\times ...\times \left( e_{x_{\alpha _{k}}}^{-1}\right)
_{s_{k}}(F_{s_{k}},E)\times \left( \underset{s\neq s_{1}...s_{k}}{\dprod }%
Y_{s}\right) ^{X}\right) \\
&=&\{\left\{ f_{s}\right\} \mid f_{s_{1}}(x_{\alpha _{1}}^{1})\in
(F_{s_{1}},E),...,f_{s_{k}}(x_{\alpha _{k}}^{k})\in (F_{s_{k}},E)\} \\
&=&(F_{s_{1}}^{x_{\alpha _{1}}^{1}},E)\times ...\times (F_{s_{k}}^{x_{\alpha
_{k}}^{k}},E)\times \left( \underset{s\neq s_{1}...s_{k}}{\dprod }%
Y_{s}\right) ^{X}
\end{eqnarray*}

is soft open, $\Delta $ is a soft open mapping.

Similarly, it can be proven that $\Delta ^{-1}$ is soft open mapping.
Indeed, for each soft open set $e_{x_{\alpha }}^{-1}\left(
(F_{s_{1}},E)\times ...\times (F_{s_{k}},E)\times \underset{s\neq
s_{1}...s_{k}}{\dprod }Y_{s}\right) \in \left( \left( \underset{s\in S}{%
\dprod }Y_{s}\right) ^{X},\left( \underset{s\in S}{\dprod }\tau
_{s}^{^{\prime }}\right) _{p},E\right) ,$

\begin{eqnarray*}
&&\Delta ^{-1}\left( e_{x_{\alpha }}^{-1}\left( (F_{s_{1}},E)\times
...\times (F_{s_{k}},E)\times \underset{s\neq s_{1}...s_{k}}{\dprod }%
Y_{s}\right) \right) \\
&=&\Delta ^{-1}\left( \left\{ f:X\rightarrow \underset{s\in S}{\dprod }%
Y_{s}\mid f(x_{\alpha })\in (F_{s_{1}},E)\times ...\times
(F_{s_{k}},E)\times \underset{s\neq s_{1}...s_{k}}{\dprod }Y_{s}\right\}
\right) \\
&=&\left\{ p_{s}\circ f\mid f(x_{\alpha })\in (F_{s_{1}},E)\times ...\times
(F_{s_{k}},E)\times \underset{s\neq s_{1}...s_{k}}{\dprod }Y_{s}\right\} \\
&=&\left\{ p_{s}\circ f\mid p_{s_{1}}\circ f(x_{\alpha })\in
(F_{s_{1}},E),...,p_{s_{k}}\circ f(x_{\alpha })\in (F_{s_{k}},E)\right\} .
\end{eqnarray*}

Hence this set is soft open and the theorem is proved.
\end{proof}

Now, let $\left( X,\tau ,E\right) $, $\left( Y,\tau ^{^{\prime }},E\right) $
and $\left( Z,\tau ^{^{\prime \prime }},E\right) $ be soft topological
spaces and $f:\left( Z,\tau ^{^{\prime \prime }},E\right) \times \left(
X,\tau ,E\right) \rightarrow \left( Y,\tau ^{^{\prime }},E\right) $ be a
soft mapping. Then the induced map $\overset{\wedge }{f}:X\rightarrow Y^{Z%
\text{ }}$is defined by $\overset{\wedge }{f}(x_{\alpha })\left( z_{\beta
}\right) =f\left( x_{\alpha },z_{\beta }\right) $ for soft points $x_{\alpha
}\in \left( X,\tau ,E\right) $ and $z_{\beta }\in \left( Z,\tau ^{^{\prime
\prime }},E\right) $. We define exponential law

\begin{equation*}
E:Y^{Z\times X}\rightarrow \left( Y^{Z}\right) ^{X}
\end{equation*}

by using induced maps $E(f)=$ $\overset{\wedge }{f}$ \ i.e., $E(f)(x_{\alpha
})\left( z_{\beta }\right) =f\left( z_{\beta },x_{\alpha }\right) =\overset{%
\wedge }{f}(x_{\alpha })\left( z_{\beta }\right) .$ We define the following
mapping

\begin{equation*}
E^{-1}:\left( Y^{Z}\right) ^{X}\rightarrow Y^{Z\times X}
\end{equation*}

which is an inverse mapping $E$ as fallows

\begin{equation*}
E^{-1}(\overset{\wedge }{f})=f,\text{ \ }E^{-1}(\overset{\wedge }{f})\left(
z_{\beta },x_{\alpha }\right) =\text{ \ }E^{-1}(\overset{\wedge }{f}%
(x_{\alpha })\left( z_{\beta }\right) )=f\left( z_{\beta },x_{\alpha
}\right) .
\end{equation*}

Generally, \i n the pointwise topology for each soft continuous map $g$, the
mapping $E^{-1}\left( g\right) $ need not to be soft continuous. Let us give
the solution of this problem under the some conditions.

\begin{theorem}
Let $\left( X,\tau ,E\right) $, $\left( Y,\tau ^{^{\prime }},E\right) $ and $%
\left( Z,\tau ^{^{\prime \prime }},E\right) $ be soft topological spaces and
the mapping $e:Y^{Z}\times X\rightarrow Z,$ $e(f,z)=f(z)$ be soft
continuous. Function space $Y^{X}$ with pointwise soft topology and for each
soft continuous mapping $\overset{\wedge }{g}:X\rightarrow Y^{Z},$

\begin{equation*}
E^{-1}\left( \overset{\wedge }{g}\right) :Z\times X\rightarrow Y
\end{equation*}

is also soft continuous.
\end{theorem}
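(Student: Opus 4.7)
The plan is to realize $E^{-1}(\overset{\wedge}{g})$ as a composition of three soft continuous maps, namely
\begin{equation*}
Z\times X \xrightarrow{\ \sigma\ } X\times Z \xrightarrow{\ \overset{\wedge}{g}\times 1_{Z}\ } Y^{Z}\times Z \xrightarrow{\ e\ } Y,
\end{equation*}
where $\sigma$ is the soft coordinate swap $(z_{\beta},x_{\alpha})\mapsto(x_{\alpha},z_{\beta})$ and $e$ is the evaluation $(f,z)\mapsto f(z)$ assumed soft continuous. Tracing a soft point through the diagram,
\begin{equation*}
e\circ(\overset{\wedge}{g}\times 1_{Z})\circ\sigma(z_{\beta},x_{\alpha})=e\bigl(\overset{\wedge}{g}(x_{\alpha}),z_{\beta}\bigr)=\overset{\wedge}{g}(x_{\alpha})(z_{\beta})=E^{-1}(\overset{\wedge}{g})(z_{\beta},x_{\alpha}),
\end{equation*}
so the displayed composition really does compute $E^{-1}(\overset{\wedge}{g})$.

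Next I would verify soft continuity of each factor. The swap $\sigma$ is soft continuous by Theorem 1: its compositions with the two projections of $X\times Z$ are the two soft continuous projections of $Z\times X$. The product $\overset{\wedge}{g}\times 1_{Z}$ is soft continuous by the remark given just after Theorem 1 that a product of a family of soft continuous maps is soft continuous, applied to $\overset{\wedge}{g}$ (continuous by hypothesis) and $1_{Z}$ (trivially continuous). The map $e$ is soft continuous by the standing assumption of the theorem. Since a composition of soft continuous maps is soft continuous (immediate from Definition 21 by chasing neighbourhoods through the two maps in turn), the entire composition is soft continuous, and hence so is $E^{-1}(\overset{\wedge}{g})$.

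The only delicate point I anticipate is bookkeeping of the types of $e$: the statement writes $e:Y^{Z}\times X\to Z$, $e(f,z)=f(z)$, but for the formula to type-check one must read this as $e:Y^{Z}\times Z\to Y$, the natural pointwise evaluation already used implicitly in the remark that every $e_{x_{\alpha}}$ is soft continuous. Once this interpretation is adopted, every ingredient of the argument — the characterization of maps into a product (Theorem 1), the product of soft continuous maps, and the composition of soft continuous maps — is supplied by results established earlier in the paper, so I do not foresee any further obstacle beyond the soft-point verification above.
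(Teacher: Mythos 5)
Your proposal is correct and follows essentially the same route as the paper: the paper writes $E^{-1}(\overset{\wedge }{g})=e\circ t\circ \left( 1_{Z}\times \overset{\wedge }{g}\right)$ with the switch $t$ applied after the product map, which is literally the same composite as your $e\circ (\overset{\wedge }{g}\times 1_{Z})\circ \sigma$, and both arguments conclude by soft continuity of the swap, the product of soft continuous maps, and the (assumed continuous) evaluation. Your reading of $e$ as $Y^{Z}\times Z\rightarrow Y$ is also the intended one; the signature in the statement is a typo.
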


\begin{proof}
By using the mapping

\begin{equation*}
1_{Z}\times \overset{\wedge }{g}:Z\times Y\rightarrow Z\times Y^{Z},
\end{equation*}

we take

\begin{equation*}
Z\times X\overset{1_{Z}\times \overset{\wedge }{g}}{\longrightarrow }Z\times
Y^{Z}\overset{t}{\longrightarrow }Y^{Z}\times Z\overset{e}{\longrightarrow }%
Y.
\end{equation*}

Hence $e\circ t\circ \left( 1_{Z}\times \overset{\wedge }{g}\right) \in
Y^{Z\times X}$, where $t$ denotes switching mapping. Let us apply
exponential law $E$ to $e\circ t\circ \left( 1_{Z}\times \overset{\wedge }{g}%
\right) $. For each soft point $x_{\alpha }\in \left( X,\tau ,E\right) $ and 
$z_{\beta }\in \left( Z,\tau ^{^{\prime \prime }},E\right) $,

\begin{eqnarray*}
\left\{ \left[ E\left( e\circ t\circ \left( 1_{Z}\times \overset{\wedge }{g}%
\right) \right) \right] \left( x_{\alpha }\right) \right\} \left( z_{\beta
}\right) &=&\left( e\circ t\circ \left( 1_{Z}\times \overset{\wedge }{g}%
\right) \right) \left( z_{\beta },x_{\alpha }\right) \\
&=&e\circ t\left( z_{\beta },\overset{\wedge }{g}(x_{\alpha })\right) \\
&=&e\left( \overset{\wedge }{g}(x_{\alpha }),z_{\beta }\right) \\
&=&\left( \overset{\wedge }{g}(x_{\alpha })\right) \left( z_{\beta }\right) .
\end{eqnarray*}

Since $E\left( e\circ t\circ \left( 1_{Z}\times \overset{\wedge }{g}\right)
\right) =\overset{\wedge }{g},$ $E^{-1}\left( \overset{\wedge }{g}\right)
=e\circ t\circ \left( 1_{Z}\times \overset{\wedge }{g}\right) .$ Hence
evaluation maps $e$ and $t$ are soft continuous, $E^{-1}\left( \overset{%
\wedge }{g}\right) $ is soft continuous.
\end{proof}

\section{\textbf{\ CONCLUSION}}

\noindent We hope that, the results of this study may help in the
investigation of soft normed spaces and in many researches.

\noindent


\begin{thebibliography}{99}
\bibitem{acar} Acar U., Koyuncu F., Tanay B., Soft Sets and Soft Rings,
Comput. Math. Appl.59 (2010) 3458-3463.

\bibitem{aktas} Aktas H., \c{C}a\u{g}man N., Soft Sets and Soft Group,
Information Science 177 (2007) 2726-2735.

\bibitem{aygun} Ayguno\u{g}lu A., Agun A., Some notes on soft topological
spaces, Neurall Comput.\&Applic, DOI:10. 1007/s00521-011-0722-3, 2011.

\bibitem{babitha} Babitha KV, Sunil JJ, Soft set relations and functions,
Comput. Math. Appl. 60:1840-1849, 2010.

\bibitem{bayramov} Bayramov S., Gunduz C., Soft locally compact spaces and
soft paracompact spaces, Journal of Math. and Sys. Sci. 3 (2013) 122-130.

\bibitem{cagman} \c{C}a\u{g}man N., Karata\c{s} S., Engino\u{g}lu S., Soft
topology, Comput. Math. Appl. (2011) 351-358.

\bibitem{engel} Engelking R., General Topology,Helderman Verlag Berlin,
Germany, 540 pp, 1989.

\bibitem{feng} Feng F., Jun Y. B., Zhao, Soft Semirings, Comput. Math.
Appl.56 (2008) 2621-2628.

\bibitem{gunduz1} Gunduz \c{C}., Bayramov S., Intuitionistic Fuzzy Soft
Modules, Comput. and Math. Appl. 62(2011) 2480-2486.

\bibitem{gunduz} Gunduz \c{C}., Bayramov S., Fuzzy Soft modules,
International Matematical Forum, Vol. 6,No11 (2011) 517-527.

\bibitem{gunduz2} Gunduz A. C., Sonmez A., \c{C}akall\i\ H., On soft
Mappings, (to appear).

\bibitem{hussain} Hussain S., Ahmad B., Some properties of soft topological
spaces, Comput. Math. Appl. 62 (2011) 4058-4067.

\bibitem{maji} Maji P. K., Roy A. R., An Application of Soft Sets in a
Decision Making Problem, Comput. Math. Appl.44 (2002) 1077-1083.

\bibitem{maji2} Maji P. K., Bismas R., Roy A. R., Soft Set Theory, Comput.
Math. Appl. 45 (2003) 555-562.

\bibitem{min} Min W. K., A note on soft topological spaces, Comput. Math.
Appl. 62(2011) 3524-3528.

\bibitem{molod} Molodtsov D., Soft Set Theory-First Results, Comput. Math.
Appl.37 (1999) 19-31.

\bibitem{taha1} Ozturk T. Y., Bayramov S., Category of chain complexes of
soft modules, International Mathematical Forum, Vol.7, No20, (2012) 981-992.

\bibitem{taha2} Ozturk T. Y., Gunduz(Aras) C., Bayramov S., Inverse and
direct systems of soft modules, Annanls of Fuzzy Math. and Infor., Vol.5,
No.1, (2013) 73-85.

\bibitem{taha3} Ku\c{c}uk A.,Ozturk T. Y., Homology modules of fuzzy soft
modules, Annanls of Fuzzy Math. and Infor., Vol.5, No.3, (2013) 607-619.

\bibitem{shabir} Shabir M., \.{I}rfan Ali M., Soft ideals and generalized
fuzzy ideals in semigroups, New Math. Nat. Comput. 5 (2009) 599-615.

\bibitem{shabir2} Shabir M., Naz M., On soft topological spaces, Comput.
Math. Appl. 61 (2011) 1786-1799.

\bibitem{shabir3} Shabir M., Bashir A., Some properties of soft topological
spaces, Comput. Math. Appl. 62 (2011) 4058-4067.

\bibitem{sun} Sun Qiu-Mei , Zi-Liong Zhang, Jing Liu, Soft Sets and Soft
Modules, Lecture Notes in Comput. Sci. 5009 (2008) 403-409.

\bibitem{zadeh} Zadeh L. A., Fuzzy Sets, Inform. Control 8, (1965) 338-353.

\bibitem{zorlutuna} Zorlutuna \.{I}., Akda\u{g} M., Min W. K., Atmaca S.,
Remarks on soft topological spaces, Annals of Fuzzy Math. and Infor. Vol.3
No.2 (2012) 171-185.
\end{thebibliography}
\end{document}